\titleformat{\section}{\mdseries\Large}{\S\thesection}{1em}{}[\vspace{-1.6em}\rule{\titlewidth}{.25pt}]
\titleformat{\subsection}{\mdseries\scshape\large}{\S\thesubsection}{1em}{}
\titleformat{\subsubsection}{\bfseries}{\S\thesubsubsection}{1em}{}
\newtheorem{theorem}{Theorem}
\newtheorem{lemma}[theorem]{Lemma}
\newtheorem{corollary}[theorem]{Corollary}
\newcommand{\R}{\ensuremath{\mathbb R}}
  \def\beq{\begin{eqnarray}} \def\eeq{\end{eqnarray}} \def\ben{\begin{enumerate}}
\def\een{\end{enumerate}}
 \def\bit{\begin{itemize}}
\def\eit{\end{itemize}}
 \def\beqs{\begin{eqnarray*}} \def\eeqs{\end{eqnarray*}} \def\bel{\begin{lemma}} \def\eel{\end{lemma}}
\newcommand{\PP}{\mathcal P}  \newcommand{\HH}{\mathcal H}  
\newcommand{\LL}{\mathcal L}
\newcommand{\beqn}{\begin{equation}}
\newcommand{\eeqn}{\end{equation}}
\theoremstyle{remark}
\newtheorem{rem}[theorem]{Remark}
\newcommand{\diam}{\mathrm{diam}}
\newcommand{\distset}{\mathrm{dist}}
\newcommand{\Lip}{\mathrm{Lip}}
\newcommand{\TT}{\mathcal{T}}
\newcommand{\W}{\mathcal{W}}
\title{Algorithms for computing the optimal Lipschitz constant of
  interpolants with Lipschitz derivative}
\author{Matthew J. Hirn\thanks{www.math.yale.edu/$\sim$mh644} \\
Yale University \\
Department of Mathematics \\
P.O. Box 208282 \\
New Haven, Connecticut 06520-8283 \\
\texttt{matthew.hirn@yale.edu}}
\begin{document}

\maketitle

\begin{abstract}
One classical measure of the quality of an interpolating
function is its Lipschitz constant. In this paper we consider
interpolants with additional smoothness requirements, in particular
that their derivatives be Lipschitz. We show that such a measure of
quality can be easily computed, giving two algorithms, one optimal in
the dimension of the data, the other optimal in the number of points
to be interpolated. 
\end{abstract}

\section{Introduction}

For an arbitrary function $g: \R^d \rightarrow \R^n$, recall that the
\emph{Lipschitz constant} of $g$ is
defined as:
\begin{equation*}
\Lip(g) \triangleq \sup_{\substack{x,y \in \R^d \\ x \neq y}}
\frac{|g(x) - g(y)|}{|x-y|},
\end{equation*}
where $|\cdot|$ is taken to be the standard Euclidean
norm. Additionally, set $\nabla g: \R^d \rightarrow \R^d$ to be the
{\it gradient} of $g$, where $\nabla g \triangleq (\frac{\partial
  g}{\partial x_1}, \ldots, \frac{\partial g}{\partial x_d})$.

Given a finite set $E \subset \R^d$ with $\#(E) = N$ and a function
  $f: E \rightarrow \R$, it is will known that the function $f$ can be
  extended to a function $F: \R^d \rightarrow \R$ such that $\Lip(F) =
  \Lip(f)$ (see the work of Whitney
  \cite{whitney:analyticExtensions1934} and McShane
  \cite{mcshane:extensionRangeFcns1934} for the original result). Such
  a function $F$ is a {\it minimal Lipschitz extension}
  of $f$, since the Lipschitz constant of $F$ cannot be lowered while
  still interpolating the function $f$. Thus to compute $\Lip(F)$, we
  must compute $\Lip(f)$. This can clearly be accomplished in $O(N^2)$
  operations. However, using the well separated pairs decomposition
  \cite{callahan:wspd1995}, one can compute a near approximation of
  $\Lip(f)$ in only $O(N\log N)$ operations. 

In this paper, we address a related problem. We assume that along with
the function values, we are also given information about the
derivatives at each point in $E$. We wish to efficiently compute the
minimal value of $\Lip(\nabla F)$, where $F: \R^d \rightarrow \R$ is a
differentiable function whose derivative is Lipschitz that
additionally interpolates the given functional and derivative information.

Let $C^{1,1}(\R^d)$ denote the space of functions mapping $\R^d$ to $\R$
whose derivatives are Lipschitz:
\begin{equation*}
C^{1,1}(\R^d) \triangleq \{ g: \R^d \rightarrow \R : \Lip(\nabla g) <
\infty \}.
\end{equation*}
Let $\PP$ denote the set of first order polynomials (i.e., affine
functions) mapping $\R^d$ to $\R$. For $F \in C^{1,1}(\R^d)$, let
$J_xF \in \PP$ denote the first order \emph{jet} of $F$ centered at
$x$, i.e., $J_xF(z) \triangleq F(x) + \nabla F(x) \cdot (z-x)$. A
\emph{Whitney 1-field} $\PP_E \triangleq \{P_x \in \PP: x \in E\}$ is a set
polynomials in $\PP$ indexed by the set $E \subset \R^d$.

In this paper we address some of the computational aspects of the following
problem:

\underline{\textsc{Jet Interpolation Problem}}: Suppose we are given a
finite set $E \subset \R^d$ and a 1-field $\PP_E = \{P_x \in \PP : x
\in E\}$. Compute a function $F \in C^{1,1}(\R^d)$ such that
\begin{enumerate}[topsep=0pt]
\item
$J_xF = P_x$ for all $x \in E$.
\item
$\Lip(\nabla F)$ is minimal.
\end{enumerate}

There are two theoretical problems tied into the \textsc{Jet
  Interpolation Problem}. The first of these involves determining the
optimal value of the semi-norm $\Lip(\nabla F)$. It is, by definition, given by:
\begin{equation*}
\LL(\PP_E) \triangleq \inf \{ \Lip(\nabla F) : F \in
C^{1,1}(\R^d) \text{ \& } J_xF = P_x \enspace \forall \, x \in E \}.
\end{equation*}
The second problem is to construct a function $F \in
C^{1,1}(\R^d)$ that interpolates the 1-field $\PP_E$ such that
$\Lip(\nabla F) = \LL(\PP_E)$.

Remarkably, there are solutions to both of these problems. In
\cite{legruyer:minLipschitzExt2009}, Le Gruyer gives a closed formula
for $\LL(\PP_E)$, while in \cite{wells:diffFuncLipDeriv1973} Wells
gives a construction for the interpolant $F$.

The two theoretical problems lead to two corresponding computational
problems: (1) efficiently computing $\LL(\PP_E)$ and (2) efficiently
computing the interpolant $F$. The theoretical results of Le Gruyer
and Wells give a roadmap by which to accomplish these tasks.

The main result of this paper is to give an
algorithm that efficiently computes a number $M$ with the same order of
magnitude of $\LL(\PP_E)$. In a follow up paper, we shall address the
problem of efficiently computing an interpolant $F \in C^{1,1}(\R^d)$
for $\PP_E$ such that $\Lip(\nabla F) = M$.

Two numbers $X, Y$ that are dependent upon $E, \PP_E,$ and $d$ are said
to have the same \emph{order of magnitude} if there exist universal
constants $c$ and $C$ such that $cY \leq X \leq CY$.

By compute we mean develop an algorithm that can run on an idealized
computer with standard von Neumann architecture, able to work with
exact real numbers. We ignore roundoff, overflow, and underflow
errors, and suppose that an exact real number can be stored at each
memory address. Additionally, we suppose that it takes one machine
operation to add, subtract, multiply, or divide two real numbers $x$
and $y$, or to compare them (i.e., decide whether $x < y$, $x > y$, or
$x = y$).

The \emph{work} of an algorithm is the number of machine operations
needed to carry it out, and the \emph{storage} of an algorithm is the
number of random access memory addresses required.

Throughout, we shall set $\#(E) = N$ to be the number of points in
$E$.

Some related work on the computation of interpolants in $C^m(\R^d)$ is given in
\cite{fefferman:fittingDataI, fefferman:fittingDataII,
  fefferman:interLinProg2011, fefferman:nearOptC2R2I}. In
particular, this work is most closely related to
\cite{fefferman:fittingDataI, fefferman:fittingDataII}, but by
working in $C^{1,1}(\R^d)$, and using the semi-norm $\Lip(\nabla F)$
as opposed to some $C^m$ norm, we are able to achieve order of
magnitude constants that do not depend on the dimension.

\section{Computing $\LL(\PP_E)$}

In this section we present two algorithms for computing
$\LL(\PP_E)$. One is an exact computation that is simply a corollary
of the results found in \cite{legruyer:minLipschitzExt2009}; it runs
in $O(d N^2)$ time and requires $O(dN)$ storage. The second, which
requires more effort to develop, computes the order of magnitude of
$\LL(\PP_E)$ in $O(d^{d/2}N\log N)$ time and requires
$O(d^{d/2}N)$ storage.

\subsection{Closed formula for $\LL(\PP_E)$ and an efficient algorithm
in the dimension $d$}

In \cite{legruyer:minLipschitzExt2009}, Le Gruyer gives a closed
formula for $\LL(\PP_E)$, which is immensely useful for its
computation. We summarize the results in this section.

For the 1-field $\PP_E = \{P_x \in \PP : x \in E\}$, define two
functionals $A: E \times E \rightarrow [0,\infty]$ and $B: E \times E
\rightarrow [0,\infty]$,
\begin{equation*}
A(x,y) \triangleq \frac{|P_x(x) - P_y(x) + P_x(y) -
  P_y(y)|}{|x-y|^2}, \qquad B(x,y) \triangleq \frac{|\nabla P_x - \nabla P_y|}{|x-y|}.
\end{equation*}
Note that $A$ was originally formulated differently in
\cite{legruyer:minLipschitzExt2009}, we have simply rewritten it in a
form more useful for our purposes. Additionally, recall that $\PP$ is
the set of first order polynomials, so for any $P \in \PP$, $\nabla P$
is a constant vector in $\R^d$.

Using $A$ and $B$, define $\Gamma$ as:
\begin{equation*}
\Gamma(\PP_E) \triangleq \max_{\substack{x,y \in E \\ x \neq y}}
\sqrt{A(x,y)^2 + B(x,y)^2} + A(x,y).
\end{equation*}
We then have the following theorem:
\begin{theorem}[Le Gruyer,
  \cite{legruyer:minLipschitzExt2009}] \label{thm: le gruyer}
For any finite $E \subset \R^d$ and any 1-field $\PP_E$,
\begin{equation*}
\LL(\PP_E) = \Gamma(\PP_E).
\end{equation*}
\end{theorem}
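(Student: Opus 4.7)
The plan is to prove $\LL(\PP_E) = \Gamma(\PP_E)$ by matching inequalities in both directions. The key device is an identity that rewrites $A(x,y)$ and $B(x,y)$ as second-order Taylor remainders of any admissible interpolant, reducing the problem to controlling these remainders under the Lipschitz bound on $\nabla F$.

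\emph{Lower bound} $\LL(\PP_E) \geq \Gamma(\PP_E)$. Fix $F \in C^{1,1}(\R^d)$ with $J_x F = P_x$ for all $x \in E$ and set $L = \Lip(\nabla F)$. For $x \neq y$ in $E$, define the scalar remainders
$$\alpha := F(y) - P_x(y), \qquad \beta := F(x) - P_y(x).$$
Using $P_x(x) = F(x)$ and $P_y(y) = F(y)$, one checks that $P_x(x) - P_y(x) + P_x(y) - P_y(y) = \beta - \alpha$ and that $\alpha + \beta = (\nabla F(y) - \nabla F(x)) \cdot (y - x)$. Hence $A(x,y) = |\alpha - \beta|/|x-y|^2$, while the global bound $|\nabla F(y) - \nabla F(x)| \leq L|x-y|$ gives $B(x,y) \leq L$ and Taylor's theorem with Lipschitz derivative gives $|\alpha|, |\beta| \leq \tfrac{1}{2} L |x-y|^2$. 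To sharpen these into the desired $L \geq A + \sqrt{A^2 + B^2}$, decompose $\nabla F(y) - \nabla F(x)$ into its components parallel and perpendicular to $y - x$ and restrict $F$ to the affine $2$-plane through $x$ spanned by these two directions; the Lipschitz bound on $\nabla F$ then couples the transverse component to the pair $(\alpha, \beta)$, and optimizing the resulting planar inequalities produces the sharp pairwise bound. Taking the maximum over $x \neq y$ in $E$ gives $L \geq \Gamma(\PP_E)$.

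\emph{Upper bound} $\LL(\PP_E) \leq \Gamma(\PP_E)$. Set $M = \Gamma(\PP_E)$; it suffices to exhibit an $F \in C^{1,1}(\R^d)$ interpolating $\PP_E$ with $\Lip(\nabla F) \leq M$. Here the plan is to invoke the construction of Wells \cite{wells:diffFuncLipDeriv1973}, which glues the affine data $\{P_x\}$ via an inf-sup of explicit local model functions attached to pairs $(P_x, P_y)$, each calibrated so as to saturate the two-point inequality encoded in $\Gamma$. Interpolation of the 1-jets is immediate at every $x \in E$, and a general gradient difference $\nabla F(u) - \nabla F(v)$ is controlled by a two-point inequality between local models, which in turn is controlled by $M$ by construction.

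The main obstacle is the sharp lower bound. The one-sided estimates $L \geq A$ and $L \geq B$ drop out of Taylor and Lipschitz essentially for free, but the additive-Pythagorean form $A + \sqrt{A^2 + B^2}$ cannot be recovered by examining $F$ only along the line through $x$ and $y$: it requires jointly constraining the scalar remainders $\alpha$, $\beta$ with the full vector $\nabla F(y) - \nabla F(x)$. Identifying the correct two-dimensional reduction and carrying out the planar optimization is the technical heart of the argument and is precisely what \cite{legruyer:minLipschitzExt2009} accomplishes; the matching upper bound is then a direct verification against the construction of \cite{wells:diffFuncLipDeriv1973}.
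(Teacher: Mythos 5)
First, note that the paper does not prove this theorem at all: it is imported verbatim from Le Gruyer \cite{legruyer:minLipschitzExt2009} (with $A$ rewritten), so there is no in-paper argument to compare yours against. Judged on its own terms, your write-up is a correct \emph{outline} of the standard two-sided strategy, and the algebra you do carry out is right: with $\alpha = F(y)-P_x(y)$ and $\beta = F(x)-P_y(x)$ one indeed has $P_x(x)-P_y(x)+P_x(y)-P_y(y) = \beta-\alpha$ and $\alpha+\beta = (\nabla F(y)-\nabla F(x))\cdot(y-x)$, and Taylor with Lipschitz gradient gives $|\alpha|,|\beta| \le \tfrac12 L|x-y|^2$ and $B(x,y)\le L$.

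However, both halves stop exactly where the content begins, so there is a genuine gap. For the lower bound, the easy estimates only yield $L \ge \max\{A,B\}$ and (via $|\beta-\alpha|\le|\alpha|+|\beta|$) $L \ge A$; the theorem requires $L \ge A+\sqrt{A^2+B^2}$, which can exceed $\max\{A,B\}$ by a factor up to $1+\sqrt2$. You name the needed device (a two-dimensional reduction coupling $(\alpha,\beta)$ to the component of $\nabla F(y)-\nabla F(x)$ transverse to $y-x$) but do not exhibit the inequality or the optimization; this is the entire theorem, not a routine verification. A cleaner route, and essentially Le Gruyer's, is to prove the three-point estimate $|J_xF(z)-J_yF(z)| \le \tfrac{L}{2}(|x-z|^2+|y-z|^2)$ for all $z$ and then show by explicit maximization over $z$ that $\sup_z 2|P_x(z)-P_y(z)|/(|x-z|^2+|y-z|^2) = A(x,y)+\sqrt{A(x,y)^2+B(x,y)^2}$; neither step appears in your sketch. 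For the upper bound, ``invoke Wells'' is not yet a proof either: Wells's hypotheses are stated as a different algebraic condition on the data (a two-point concavity-type inequality involving $\tfrac{M}{4}|x-y|^2 - \tfrac{1}{4M}|\nabla P_x-\nabla P_y|^2$), and one must verify that $\Gamma(\PP_E)\le M$ is equivalent to that condition before Wells's construction delivers an interpolant with $\Lip(\nabla F)\le M$. As written, the proposal is an accurate roadmap with correct preliminary identities, but the two decisive steps are deferred to the very references the theorem cites.
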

Thus the functional $\Gamma(\PP_E)$ is the closed form of $\LL(\PP_E)$. If
the number of data points $N$ is reasonable, then it yields an obvious
algorithm for computing $\LL(\PP_E)$ by simply evaluating $A(x,y)$ and
$B(x,y)$ for all unique pairs $x,y \in E$ and computing $\Gamma(\PP_E)$. We
state this as a corollary.
\begin{corollary} \label{cor: le gruyer alg}
There is an algorithm, whose inputs are the set $E$ and the 1-field
$\PP_E$, that computes $\LL(\PP_E)$ exactly. It requires $O(dN^2)$
work and $O(dN)$ storage.
\end{corollary}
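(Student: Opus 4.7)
The plan is to take Theorem~\ref{thm: le gruyer} as a black box and observe that evaluating $\Gamma(\PP_E)$ directly by iterating over all unordered pairs of distinct points in $E$ satisfies the stated bounds, provided we are careful to keep only a running maximum rather than storing every pairwise value.

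First I would fix an input representation: store the $N$ points of $E$ as $d$-tuples of reals ($O(dN)$ storage), and represent each $P_x \in \PP_E$ by the pair $(P_x(x), \nabla P_x) \in \R \times \R^d$, which again uses $O(dN)$ storage in total. This representation lets us evaluate $P_x(y) = P_x(x) + \nabla P_x \cdot (y-x)$ for any $y \in \R^d$ in $O(d)$ machine operations.

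Next I would run a double loop over unordered pairs $\{x,y\} \subset E$ with $x \neq y$, maintaining a scalar variable $M$ initialized to $0$. For each pair: compute $y-x$ and $|x-y|^2 = (y-x)\cdot(y-x)$ in $O(d)$ work; compute the four jet evaluations $P_x(x), P_x(y), P_y(x), P_y(y)$ in $O(d)$ work; assemble $A(x,y)$ and $B(x,y) = |\nabla P_x - \nabla P_y|/|x-y|$, also in $O(d)$ work; compute $q = \sqrt{A(x,y)^2 + B(x,y)^2} + A(x,y)$; and replace $M$ by $\max(M,q)$. After the loops return $M$. Since there are $\binom{N}{2} = O(N^2)$ pairs and each pair is handled in $O(d)$ operations, the total work is $O(dN^2)$, and beyond the input only $O(1)$ additional storage is used, so total storage remains $O(dN)$.

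There is essentially no obstacle: Theorem~\ref{thm: le gruyer} guarantees that the value returned, $M = \max_{x\neq y}\bigl(\sqrt{A(x,y)^2+B(x,y)^2}+A(x,y)\bigr) = \Gamma(\PP_E)$, equals $\LL(\PP_E)$ exactly. The only things worth emphasizing in the write-up are (i) the choice to represent each affine polynomial by its value and gradient, so that evaluations cost $O(d)$ rather than requiring a polynomial basis conversion, and (ii) the use of a running maximum so that the pairwise scores never need to be stored simultaneously.
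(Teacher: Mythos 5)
Your proposal is correct and matches the paper's treatment exactly: the paper proves this corollary simply by observing that one evaluates $A(x,y)$ and $B(x,y)$ for all unique pairs and takes the maximum, with the same input representation ($P_x(x)$ and $\nabla P_x$ for each $x$) noted in a remark. Nothing further is needed.
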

The obvious benefit of this algorithm is that it computes $\LL(\PP_E)$
exactly. Additionally, the storage is asymptotically optimal both in
$d$ and in $N$, and the work is asymptotically optimal in $d$. On the
other hand, if the number of points $N$ is large, then the $O(dN^2)$
work is at best impractical, and at worst impossible. In order to
handle this situation, we turn to the well separated pairs
decomposition.
\begin{rem}
When we say that we input $\PP_E$ into the computer, what we 
mean is that we input $P_x(x) \in \R$ and $\nabla P_x \in \R^d$ for
each $x \in E$.
\end{rem}
\begin{rem}
In fact Theorem \ref{thm: le gruyer} holds not only for $\R^d$, but
for any Hilbert space with real valued inner product. Consequently,
Corollary \ref{cor: le gruyer alg} can be applied to work in any
Hilbert space (replacing the Euclidean norm with the Hilbert space
norm), including infinite dimensional Hilbert spaces, so long
as one has a method (or ``black box'') by which to compute inner
products. This is often the case when the set $E \subset \R^d$ but 
one utilizes a kernel function $k: E \times E \rightarrow \R$ such
that $k(x,y)$ is the inner product in a Hilbert space $\HH$ after some
implicit mapping $\varphi: E \rightarrow \HH$. 
\end{rem}

\subsection{Well separated pairs decomposition} \label{sec: wspd}

The well separated pairs decomposition was first introduced by
Callahan and Kosaraju in \cite{callahan:wspd1995}; we shall make use
of a modified version that was described in detail in
\cite{fefferman:fittingDataI}.

First, recall the standard definitions of the \emph{diameter} of a set
and the \emph{distance} between two sets. Let $S, T \subset \R^d$,
\begin{equation*}
\diam(S) \triangleq \sup_{\substack{x,y \in S \\ x \neq y}} |x-y|,
\qquad \distset(S,T) \triangleq \inf_{\substack{x \in S \\ y \in T}} |x-y|.
\end{equation*}
Let $\varepsilon > 0$; two sets $S, T \subset \R^d$ are
$\varepsilon$\emph{-separated} if
\begin{equation*}
\max \{\diam(S), \diam(T)\} < \varepsilon \distset(S,T).
\end{equation*}

We follow the construction detailed by Fefferman and Klartag in
\cite{fefferman:fittingDataI}. Let $\TT$ be a collection of subsets of
$E$. For any $\Lambda \subset
\TT$, set
\begin{equation*}
\cup \Lambda \triangleq \bigcup_{S \in \Lambda} S = \{x : x \in S
\text{ for some } S \in \Lambda \}.
\end{equation*}
Let $\W$ be a set of pairs $(\Lambda_1, \Lambda_2)$ where $\Lambda_1,
\Lambda_2 \subset \TT$. For any $\varepsilon > 0$, the pair $(\TT, \W)$
is an \emph{$\varepsilon$-well separated pairs decomposition} or
\emph{$\varepsilon$-WSPD} for short if the following properties hold:
\begin{enumerate}[topsep=0pt]
\item \label{item: F-K 1}
$\bigcup_{(\Lambda_1,\Lambda_2) \in \W} \cup \Lambda_1 \times \cup
\Lambda_2 = \{ (x,y) \in E \times E : x \neq y \}$.
\item \label{item: F-K 2}
If $(\Lambda_1, \Lambda_2), (\Lambda_1', \Lambda_2') \in \W$ are
distinct pairs, then $(\cup \Lambda_1 \times \cup \Lambda_2) \cap
(\cup \Lambda_1' \times \cup \Lambda_2') = \emptyset$.
\item
$\cup \Lambda_1$ and $\cup \Lambda_2$ are $\varepsilon$-separated for
any $(\Lambda_1, \Lambda_2) \in \W$.
\item
$\#(\TT) < C(\varepsilon, d) N$ and $\#(\W) < C(\varepsilon, d) N$.
\end{enumerate}

As shown in \cite{fefferman:fittingDataI}, there is a data structure
representing $(\TT,\W)$ that satisfies the following additional
properties as well:
\begin{enumerate}[topsep=0pt,resume]
\item
The amount of storage to hold the data structure is
$O((\sqrt{d}/\varepsilon)^dN)$.
\item \label{item: F-K 6}
The following tasks require at most $O((\sqrt{d}/\varepsilon)^dN\log N)$ work and
$O((\sqrt{d}/\varepsilon)^dN)$ storage:
\begin{enumerate}[topsep=0pt]
\item
Go over all $S \in \TT$, and for each $S$ produce a list of elements in
$S$.
\item
Go over all $(\Lambda_1,\Lambda_2) \in W$, and for each $(\Lambda_1,
\Lambda_2)$ produce the elements (in $\TT$) of $\Lambda_1$ and
$\Lambda_2$.
\item
Go over all $S \in \TT$, and for each $S$ produce the list of all
$(\Lambda_1, \Lambda_2) \in W$ such that $S \in \Lambda_1$.
\item
Go over all $x \in E$, and for each $x \in E$ produce a list of $S \in
\TT$ such that $x \in S$.
\end{enumerate}
\end{enumerate}
As a result of property \ref{item: F-K 6}, it follows that the
following properties also hold:
\begin{enumerate}[topsep=0pt,resume]
\item \label{item: F-K 7}
For $C(\varepsilon,d) = O((\sqrt{d}/\varepsilon)^d)$,
\begin{enumerate}[topsep=0pt]
\item
$\sum_{(\Lambda_1, \Lambda_2) \in \W} (\#(\Lambda_1) + \#(\Lambda_2)) <
C(\varepsilon,d) N\log N$.
\item
$\sum_{S \in \TT} \#(S) < C(\varepsilon,d) N\log N$.
\end{enumerate}
\end{enumerate}

\begin{theorem}[Fefferman and Klartag,
  \cite{fefferman:fittingDataI}] \label{thm: F-K WSPD}
There is an algorithm, whose inputs are the parameter $\varepsilon >
0$ and a subset $E \subset \R^d$ with $\#(E) = N$, that outputs a
$\varepsilon$-WSPD $(\TT,\W)$ of $E$ such that properties \ref{item:
  F-K 1},$\ldots$,\ref{item: F-K 7} hold. The algorithm requires
$O((\sqrt{d}/\varepsilon)^dN\log N)$ work and
$O((\sqrt{d}/\varepsilon)^d) N)$ storage.
\end{theorem}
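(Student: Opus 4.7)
My plan is to follow the construction of Callahan--Kosaraju \cite{callahan:wspd1995} with the Fefferman--Klartag refinements from \cite{fefferman:fittingDataI}, and then verify each property in turn. The algorithm is built in two stages: first the set family $\TT$, then the pair set $\W$.

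For $\TT$, I would build a \emph{fair split tree} of $E$. Start from an axis-parallel bounding box of $E$, and recursively split along the longest coordinate direction at its midpoint, shrinking the resulting two boxes back down to the bounding boxes of the points they contain before recursing. Halt a branch once its set contains a single point. The nodes of the tree are put into $\TT$; each $S\in\TT$ is the set of points of $E$ in a node. Standard arguments show the tree has at most $2N-1$ nodes, can be built in $O(dN\log N)$ time using $O(dN)$ storage, and the bounding boxes decrease in diameter geometrically along any root-to-leaf path. In particular a simple induction on depth gives $\sum_{S\in\TT}\#(S) = O(N\log N)$, which yields property~7(b).

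To build $\W$, I would run the usual recursive \texttt{FindPairs} procedure on pairs of tree nodes $(u,v)$: if the boxes of $u$ and $v$ are $\varepsilon$-separated, emit a pair $(\Lambda_1,\Lambda_2)$ (where $\Lambda_i$ is a singleton containing the set associated with that node); otherwise recurse on pairs of children of whichever of $u,v$ has the larger bounding box. A packing argument shows that for any fixed node $u$, the total number of emitted pairs $(\Lambda_1,\Lambda_2)\in\W$ with $u\in\Lambda_1\cup\Lambda_2$ is $O((\sqrt d/\varepsilon)^d)$: the emitted partners at the scale comparable to $u$ must fit, disjointly up to a constant, into a ball of radius $O(\diam(u)/\varepsilon)$ while each having diameter $\Omega(\diam(u))$, and a volume comparison in $\R^d$ bounds their number. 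Summing over $u$ gives $\#(\W) = O((\sqrt d/\varepsilon)^d N)$ and the full cost $O((\sqrt d/\varepsilon)^d N \log N)$ in time and $O((\sqrt d/\varepsilon)^d N)$ in storage. Properties 1--4 are then immediate from the recursion: 1 because the procedure is guaranteed to terminate on every non-diagonal pair with an emission (exhaustiveness), 2 because once a pair is emitted its sub-pairs are not recursed on (disjointness), 3 by the separation test triggering emission, and 4 by the packing bound plus $\#(\TT)=O(N)$.

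The remaining storage and traversal properties (5 and 6(a)--(d)) are bookkeeping: augment $\TT$ with, for each $S\in\TT$, a linked list of its elements, and with cross-pointers between $\TT$ and $\W$ (for each $S$, the list of pairs containing $S$ in either side; for each $x\in E$, the list of $S$ containing $x$). These lists are populated as the tree and $\W$ are built without asymptotically increasing cost, and their total length is $O(\#(\TT)+\#(\W)+\sum_S \#(S)) = O((\sqrt d/\varepsilon)^d N\log N)$, which also yields property~7(a). I expect the only non-routine step to be the packing argument underlying the $(\sqrt d/\varepsilon)^d$ bound; once that is in place the remaining verifications are direct consequences of how the recursion organizes the emitted pairs. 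Since this is essentially the construction of \cite{fefferman:fittingDataI}, I would cite that paper for the detailed implementation and limit the in-paper argument to the structural outline above.
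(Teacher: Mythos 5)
The paper does not prove this theorem at all: it is quoted verbatim from Fefferman--Klartag \cite{fefferman:fittingDataI}, with only a remark describing how their construction relates to Callahan--Kosaraju. So your attempt to sketch a proof goes beyond what the paper does, and most of your outline (the fair split tree, the \texttt{FindPairs} recursion, the packing bound for $\#(\W)$, the separation and disjointness arguments for properties 1--4) is the correct standard argument.

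There is, however, one genuine gap, and it is exactly the point where Fefferman and Klartag depart from Callahan--Kosaraju. You take $\TT$ to be the node sets of the fair split tree and claim that ``a simple induction on depth gives $\sum_{S\in\TT}\#(S)=O(N\log N)$.'' That induction does not exist: the fair split tree guarantees geometric decay of bounding-box side lengths, but it is \emph{not} balanced --- its splits can peel off one point at a time, so its depth can be $\Theta(N)$ and hence $\sum_{S\in\TT}\#(S)$ can be $\Theta(N^2)$. This breaks properties 7(b), 6(a) and 6(d) (the list of $S\ni x$ has length equal to the depth of $x$'s leaf), and the $O(N\log N)$ work bound for populating the element lists. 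The repair is the one the paper's remark alludes to: replace $\TT$ by a \emph{completely balanced} binary tree built over the inorder ordering of the fair split tree's leaves. Each original fair-split node is then a contiguous interval of leaves and decomposes into $O(\log N)$ nodes of the balanced tree; this is precisely why the pairs in $\W$ are pairs of \emph{collections} $(\Lambda_1,\Lambda_2)$ with $\Lambda_i\subset\TT$ rather than the singletons you use, and why property 7(a) reads $\sum(\#(\Lambda_1)+\#(\Lambda_2))<C(\varepsilon,d)N\log N$ instead of $O(C(\varepsilon,d)N)$. With singleton $\Lambda_i$ on an unbalanced tree your claimed bounds fail; with the rebalanced $\TT$ and interval decompositions, all of properties 1--7 go through as stated.
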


\begin{rem}
The algorithm presented in \cite{fefferman:fittingDataI} is built upon
the well separated pairs decomposition algorithm developed by Callahan
and Kosaraju in \cite{callahan:wspd1995}. In fact, $\TT$ is a
completely balanced binary tree based off the inorder relation derived
from the fair split tree presented in \cite{callahan:wspd1995}. In
particular, $\#(\TT) < 2N$ and the height of the tree is bounded by
$\lceil \log_2 N \rceil + 1$. The list $\W$ has a one-to-one
correspondence with the well separated pair list presented in
\cite{callahan:wspd1995}, hence $\#(\W) = O((\sqrt{d}/\varepsilon)^dN)$.
\end{rem}

\subsection{Efficient computation of $\LL(\PP_E)$ in the number of
  points $N$}

In this section we prove the following theorem:
\begin{theorem}\label{thm: efficient L(PE)}
There is an algorithm, whose inputs are the set $E$ and the 1-field
$\PP_E$, that computes the order of magnitude of $\LL(\PP_E)$. It
requires $O(d^{d/2}N\log N)$ work and $O(d^{d/2}N)$ storage.
\end{theorem}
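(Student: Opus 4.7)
The plan is to apply the $\varepsilon$-WSPD of Theorem~\ref{thm: F-K WSPD} to coarsen the $\binom{N}{2}$-pair maximum in Le Gruyer's formula (Theorem~\ref{thm: le gruyer}) into a maximum over the $O(d^{d/2}N)$ well-separated pairs, sacrificing only a constant factor in accuracy. Fix a small universal constant $\varepsilon>0$, to be pinned down at the end. Build the $\varepsilon$-WSPD $(\TT,\W)$ for $E$; for each pair $(\Lambda_1,\Lambda_2)\in\W$, pick any representatives $x_0\in\cup\Lambda_1$, $y_0\in\cup\Lambda_2$, compute $A(x_0,y_0)$ and $B(x_0,y_0)$ in $O(d)$ work, and output
\begin{equation*}
M := \max\Bigl\{\sqrt{A(x_0,y_0)^2+B(x_0,y_0)^2}+A(x_0,y_0) : (\Lambda_1,\Lambda_2)\in\W\Bigr\}.
\end{equation*}

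For complexity, Theorem~\ref{thm: F-K WSPD} supplies the WSPD in $O((\sqrt{d}/\varepsilon)^d N\log N)=O(d^{d/2}N\log N)$ work and $O(d^{d/2}N)$ storage, and the $O(d)$ per-pair evaluation over the $O(d^{d/2}N)$ pairs is subdominant; selecting a representative for each pair fits naturally inside the WSPD output (for instance, the first element of the first subset in $\Lambda_i$). One direction of correctness is immediate: each summand of $M$ is $\sqrt{A^2+B^2}+A$ at some distinct $x_0,y_0\in E$, so $M\leq\Gamma(\PP_E)=\LL(\PP_E)$.

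The substantive direction rests on a perturbation lemma: for every $(\Lambda_1,\Lambda_2)\in\W$ and every $x\in\cup\Lambda_1$, $y\in\cup\Lambda_2$,
\begin{equation*}
|A(x,y)-A(x_0,y_0)|+|B(x,y)-B(x_0,y_0)|\leq C\varepsilon\,\LL(\PP_E).
\end{equation*}
Granted this, any $(x^*,y^*)$ attaining $\Gamma(\PP_E)$ lies in some $\cup\Lambda_1\times\cup\Lambda_2$, whose representative pair evaluates to at least $\Gamma(\PP_E)-C'\varepsilon\,\LL(\PP_E)$; fixing $\varepsilon:=1/(2C')$ then gives $M\geq\LL(\PP_E)/2$.

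The main obstacle is the perturbation lemma itself. The tools are the $\varepsilon$-separation inequalities $|x-x_0|,|y-y_0|\leq\varepsilon|x-y|$ and $|x_0-y_0|\in[(1-2\varepsilon)|x-y|,(1+2\varepsilon)|x-y|]$, combined with Le Gruyer's pointwise bounds $A(\xi,\eta),B(\xi,\eta)\leq\LL(\PP_E)$ at any two distinct points of $E$. From $B(x,x_0)\leq\LL(\PP_E)$ one reads off $|\nabla P_x-\nabla P_{x_0}|\leq\LL(\PP_E)|x-x_0|$, and a short rewriting of $A(x,x_0)$ using the affine form of $P_x,P_{x_0}$ yields $|P_x(x_0)-P_{x_0}(x_0)|\leq\LL(\PP_E)|x-x_0|^2$. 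Since $P_x-P_{x_0}$ is affine, these combine into the pointwise estimate
\begin{equation*}
|P_x(z)-P_{x_0}(z)|\leq\LL(\PP_E)\bigl(|x-x_0|^2+|x-x_0|\cdot|z-x_0|\bigr),\qquad z\in\R^d,
\end{equation*}
with an analogous bound for $y\leftrightarrow y_0$. Plugging these into the identity $A(x,y)=2|(P_x-P_y)((x+y)/2)|/|x-y|^2$ (valid since $P_x-P_y$ is affine, so its average over $\{x,y\}$ equals its value at the midpoint) and swapping $(x,y)\to(x_0,y)\to(x_0,y_0)$ one coordinate at a time, one controls $|A(x,y)-A(x_0,y_0)|$ by $O(\varepsilon\,\LL(\PP_E))$; the corresponding bound for $B$ follows from the triangle inequality applied to $|\nabla P_x-\nabla P_y|$ together with the comparability of the denominators $|x-y|$ and $|x_0-y_0|$.
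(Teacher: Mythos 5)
Your proposal is correct, but it takes a genuinely different route from the paper, both in the algorithm and in the proof. Algorithmically, you evaluate Le Gruyer's functional only at one representative pair per element of $\W$, whereas the paper first replaces $\Gamma$ by the simpler $\widetilde{\Gamma}=\max\{\widetilde A,B\}$ (Lemma \ref{lem: first gamma estimate}) and then computes a three-level quantity: pair representatives ($\widetilde{\Gamma}_1$), set representatives against pair representatives ($\widetilde{\Gamma}_2$), and every point against its set representative ($\widetilde{\Gamma}_3$). The paper needs all three levels because its correctness argument is a telescoping chain $x\to x_S\to x_{\Lambda_1}\to\dots$ in which each link is bounded by one of the \emph{computed} maxima, yielding a multiplicative bound $\widetilde A(\PP_E)\le(3+23\varepsilon)M$ that is valid for any $\varepsilon$ (the paper takes $\varepsilon=1/2$). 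You avoid the extra levels by invoking Le Gruyer's theorem as an a priori bound --- $A(\xi,\eta),B(\xi,\eta)\le\Gamma(\PP_E)=\LL(\PP_E)$ for all distinct $\xi,\eta\in E$ --- to prove a stability estimate for $A$ and $B$ under moving to representatives, with \emph{additive} error $C\varepsilon\,\LL(\PP_E)$. The price is that your bound reads $\Gamma\le M+C'\varepsilon\,\Gamma$, so unlike the paper you must take $\varepsilon$ below the universal threshold $1/C'$ for the conclusion to be nonvacuous; since $C'$ is universal this only affects the constant in the $O((\sqrt d/\varepsilon)^d)$ factors, which the paper already treats loosely as $O(d^{d/2})$. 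I checked the ingredients of your perturbation lemma: the rewriting $A(x,y)=2|(P_x-P_y)(\tfrac{x+y}{2})|/|x-y|^2$, the deduction of $|P_x(x_0)-P_{x_0}(x_0)|\le\LL|x-x_0|^2$ from $A(x,x_0),B(x,x_0)\le\LL$, the affine-shift bound (the paper's Lemma \ref{lem: polynomial shift}), and the comparability of denominators all go through, so the one-coordinate-at-a-time swap does give $|A(x,y)-A(x_0,y_0)|+|B(x,y)-B(x_0,y_0)|\le C\varepsilon\,\LL(\PP_E)$ as claimed. Net effect: your algorithm does strictly less work per WSPD pair and skips Lemma \ref{lem: first gamma estimate} entirely, at the cost of leaning harder on Theorem \ref{thm: le gruyer} and requiring a small (but still universal) $\varepsilon$.
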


The plan for proving Theorem \ref{thm: efficient L(PE)} is the
following. First we view Le Gruyer's $\Gamma$ functional from the
perspective of the classical Whitney conditions. Once we formalize
this concept, we can use the $\varepsilon$-WSPD of Fefferman and
Klartag, since they built it to handle interpolants in $C^m(\R^n)$
satisfying Whitney conditions.

Concerning the first part, consider the original Whitney conditions
for $C^{1,1}(\R^n)$:
\begin{enumerate}[topsep=0pt]
\setcounter{enumi}{-1}
\renewcommand{\theenumi}{$(W_{\arabic{enumi}})$}
\renewcommand{\labelenumi}{\theenumi}
\item\label{W0}
$|(P_x-P_y)(x)| \leq M|x-y|^2$ for all $x,y \in E$.
\item\label{W1}
$|\frac{\partial}{\partial x_i}(P_x - P_y)(x)| \leq M|x-y|$ for all $x,y \in E$,
$i = 1, \ldots, d$.
\end{enumerate}
Whitney's extension theorem states that if \ref{W0} and \ref{W1} hold,
then there exists an $F \in C^{1,1}(\R^d)$ that interpolates $\PP_E$ such
that $\Lip(\nabla F) \leq C(d)M$. 

The main contribution of \cite{legruyer:minLipschitzExt2009} is to refine \ref{W0} and \ref{W1}
such that $C(d) = 1$; this is $\Gamma$. Indeed, the functional $A$
corresponds to \ref{W0}, the functional $B$ corresponds to
\ref{W1}, and $\Gamma$ pieces them together. Note there are some
small, but significant differences. In particular, the functional $A$
is essentially a symmetric version of \ref{W0}; using one is
equivalent to using the other, up to a factor of two. The functional
$B$ though, merges all of the partial derivative information into one
condition, unlike \ref{W1}. Thus they are equivalent only up to a
factor of $d$, the dimension of the Euclidean space we are working
in. For the algorithm in this section, we will use the functional $B$
since it is both simpler and more useful than \ref{W1}, but use \ref{W0}
instead of $A$. Additionally, we will treat them separately instead of together like in
$\Gamma$; Lemma \ref{lem: first gamma estimate} contains the details.

For the 1-field $\PP_E$,
define the functional $\widetilde{A}: E \times E \rightarrow
[0,\infty]$ (which is essentially the same as \ref{W0}),
\begin{equation*}
\widetilde{A}(x,y) \triangleq \frac{|P_x(x) - P_y(x)|}{|x-y|^2}.
\end{equation*}
Additionally, set
\begin{equation*}
\widetilde{\Gamma}(\PP_E) \triangleq \max_{\substack{x,y \in E \\ x
    \neq y}} \Big\{ \max\{\widetilde{A}(x,y), B(x,y)\} \Big\}.
\end{equation*}
The functional $\widetilde{\Gamma}(\PP_E)$ is more easily approximated
via the $\varepsilon$-WSPD than $\Gamma(\PP_E)$. Furthermore, as the
following Lemma shows, they have the same order of magnitude.
\begin{lemma} \label{lem: first gamma estimate}
For any finite $E \subset \R^d$ and any 1-field $\PP_E$,
\begin{equation*}
\widetilde{\Gamma}(\PP_E) \leq \Gamma(\PP_E) \leq 2(1+ \sqrt{2})
\widetilde{\Gamma}(\PP_E).
\end{equation*}
\end{lemma}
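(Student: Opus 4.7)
The strategy is to exploit the fact that $P_x - P_y$ is affine, so its values at $x$ and $y$ are pinned down by its gradient $\nabla P_x - \nabla P_y$ and a single value. This lets me move between $\widetilde{A}$ (a one-sided version) and $A$ (the symmetrized version) at the cost of one extra $B$ or $\widetilde{A}$ term.

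For the lower bound, I would first handle $B$: the trivial estimate $B(x,y) \leq \sqrt{A(x,y)^2 + B(x,y)^2} \leq \Gamma(\PP_E)$ already gives what is needed. For $\widetilde{A}(x,y)$, the plan is to decompose
\begin{equation*}
2\bigl(P_x(x) - P_y(x)\bigr) = \bigl[(P_x - P_y)(x) + (P_x - P_y)(y)\bigr] + \bigl[(P_x - P_y)(x) - (P_x - P_y)(y)\bigr].
\end{equation*}
The first bracket has absolute value $A(x,y)|x-y|^2$ by definition of $A$. The second bracket, since $P_x - P_y$ is affine with gradient $\nabla P_x - \nabla P_y$, equals $(\nabla P_x - \nabla P_y)\cdot(x-y)$, whose absolute value is bounded by $B(x,y)|x-y|^2$ by Cauchy--Schwarz. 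Dividing by $2|x-y|^2$ gives $\widetilde{A}(x,y) \leq (A(x,y) + B(x,y))/2 \leq \sqrt{A(x,y)^2+B(x,y)^2} + A(x,y) \leq \Gamma(\PP_E)$.

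For the upper bound, I would use the triangle inequality to push $A$ into $\widetilde{A}$: since $P_x(x)-P_y(x)+P_x(y)-P_y(y) = (P_x-P_y)(x) + (P_x-P_y)(y)$, we get
\begin{equation*}
A(x,y) \leq \widetilde{A}(x,y) + \widetilde{A}(y,x) \leq 2\widetilde{\Gamma}(\PP_E),
\end{equation*}
and trivially $B(x,y) \leq \widetilde{\Gamma}(\PP_E)$. Then the elementary inequality $\sqrt{a^2+b^2} \leq \sqrt{2}\max(a,b)$ for $a,b \geq 0$ gives $\sqrt{A(x,y)^2+B(x,y)^2} \leq \sqrt{2}\cdot 2\widetilde{\Gamma}(\PP_E) = 2\sqrt{2}\,\widetilde{\Gamma}(\PP_E)$, so
\begin{equation*}
\sqrt{A(x,y)^2+B(x,y)^2} + A(x,y) \leq 2\sqrt{2}\,\widetilde{\Gamma}(\PP_E) + 2\widetilde{\Gamma}(\PP_E) = 2(1+\sqrt{2})\,\widetilde{\Gamma}(\PP_E).
\end{equation*}
Taking the maximum over all pairs $x \neq y$ finishes the bound.

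No step looks genuinely difficult; the only place where one has to be slightly careful is the decomposition in the lower bound, where the right identity to use is the affine one $(P_x-P_y)(x)-(P_x-P_y)(y) = (\nabla P_x - \nabla P_y)\cdot(x-y)$, which is what converts the ``$-$'' part of $2(P_x-P_y)(x)$ into something controlled by $B$. Everything else is a triangle inequality or an elementary $\sqrt{a^2+b^2}$ estimate.
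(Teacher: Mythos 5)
Your proof is correct and takes essentially the same route as the paper's: the same affine identity $(P_x-P_y)(x)-(P_x-P_y)(y)=(\nabla P_x-\nabla P_y)\cdot(x-y)$ to trade between $A$ and $\widetilde{A}$ at the cost of a $B$ term, the same triangle inequality $A\leq\widetilde{A}(x,y)+\widetilde{A}(y,x)$, and the same $\sqrt{2}$ estimate. The paper merely organizes the pairwise bounds through an intermediate quantity $\Gamma'(\PP_E)=\max_{x\neq y}\max\{A(x,y),B(x,y)\}$, which you fold directly into the final inequalities.
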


\begin{proof}
To bridge the gap between $\Gamma(\PP_E)$ and
$\widetilde{\Gamma}(\PP_E)$, we first consider
\begin{equation*}
\Gamma'(\PP_E) \triangleq \max_{\substack{x,y \in E \\ x \neq y}}
\Big\{ \max \{A(x,y), B(x,y)\} \Big\}.
\end{equation*}
Clearly $\Gamma'(\PP_E) \leq \Gamma(\PP_E)$. Furthermore,
\begin{align*}
\Gamma(\PP_E) &= \max_{\substack{x,y \in E \\ x \neq y}} \sqrt{A(x,y)^2
  + B(x,y)^2} + A(x,y) \\
&\leq \sqrt{\Gamma'(\PP_E)^2 + \Gamma'(\PP_E)^2} + \Gamma'(\PP_E) \\
&\leq (1 + \sqrt{2})\Gamma'(\PP_E).
\end{align*}
Thus $\Gamma(\PP_E)$ and $\Gamma'(\PP_E)$ have the same order of
magnitude, and in particular,
\begin{equation}\label{eqn: Gamma and Gamma'}
\Gamma'(\PP_E) \leq \Gamma(\PP_E) \leq (1+\sqrt{2})\Gamma'(\PP_E).
\end{equation}

Now let us consider $\Gamma'(\PP_E)$ and
$\widetilde{\Gamma}(\PP_E)$ (which means considering
$A(x,y)$ and $\widetilde{A}(x,y)$). First,
\begin{align*}
|P_x(x) - P_y(x) + P_x(y) - P_y(y)| &\leq |P_x(x) - P_y(x)| + |P_x(y)
- P_y(y)| \\
&\leq 2\widetilde{\Gamma}(\PP_E) |x-y|^2,
\end{align*}
and so, $\Gamma'(\PP_E) \leq 2\widetilde{\Gamma}(\PP_E)$. For a
reverse inequality, we note,
\begin{equation*}
|P_x(x) - P_y(x) + P_x(y) - P_y(y)| = |2(P_x(x) - P_y(x)) + (\nabla
P_y - \nabla P_x) \cdot (x-y)|.
\end{equation*}
Thus,
\begin{align*}
2|P_x(x) - P_y(x)| &\leq \Gamma'(\PP_E)|x-y|^2 + |(\nabla P_y - \nabla
P_x) \cdot (x-y)| \\
&\leq 2\Gamma'(\PP_E) |x-y|^2,
\end{align*}
which yields $\widetilde{\Gamma}(\PP_E) \leq
\Gamma'(\PP_E)$. Combining the two inequalities,
\begin{equation}\label{eqn: Gamma' and Gammatilde}
\widetilde{\Gamma}(\PP_E) \leq \Gamma'(\PP_E) \leq 2\widetilde{\Gamma}(\PP_E).
\end{equation}
Putting \eqref{eqn: Gamma and Gamma'} and \eqref{eqn: Gamma' and
  Gammatilde} together completes the proof.
\end{proof}

We will also need the following simple Lemmas.
\begin{lemma}\label{lem: wspd estimates}
Let $(\TT,\W)$ be a $\varepsilon$-WSPD, $(\Lambda_1, \Lambda_2) \in
\W$, $x,x',x'' \in \cup \Lambda_1$, and $y,y' \in \cup
\Lambda_2$. Then,
\begin{align*}
&|x'-x''| \leq \varepsilon |x-y| \\
&|x'-y'| \leq (1+2\varepsilon) |x-y|.
\end{align*}
\end{lemma}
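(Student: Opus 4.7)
The plan is to prove both inequalities as direct consequences of the $\varepsilon$-separation property of WSPDs together with the triangle inequality; no deep ideas are involved, just careful bookkeeping of diameters and distances.

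First I would handle $|x'-x''| \leq \varepsilon|x-y|$. Since $x',x'' \in \cup\Lambda_1$, by definition of diameter we have $|x'-x''| \leq \diam(\cup\Lambda_1)$. The $\varepsilon$-WSPD property ensures that $\cup\Lambda_1$ and $\cup\Lambda_2$ are $\varepsilon$-separated, which gives
\begin{equation*}
\diam(\cup\Lambda_1) < \varepsilon\,\distset(\cup\Lambda_1,\cup\Lambda_2).
\end{equation*}
Since $x \in \cup\Lambda_1$ and $y \in \cup\Lambda_2$, the distance $\distset(\cup\Lambda_1,\cup\Lambda_2)$ is at most $|x-y|$, and combining these bounds yields the desired inequality.

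Next I would establish $|x'-y'| \leq (1+2\varepsilon)|x-y|$ using the triangle inequality
\begin{equation*}
|x'-y'| \leq |x'-x| + |x-y| + |y-y'|.
\end{equation*}
The outer two terms are bounded by $\diam(\cup\Lambda_1)$ and $\diam(\cup\Lambda_2)$ respectively, each of which is at most $\varepsilon\,\distset(\cup\Lambda_1,\cup\Lambda_2) \leq \varepsilon|x-y|$ by the $\varepsilon$-separation property. Summing the three terms gives $(1+2\varepsilon)|x-y|$.

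There is no real obstacle here; the statement is essentially a packaging of the defining property of $\varepsilon$-separated sets into a form that will be convenient for later applications (namely, comparing pairs within a single WSPD block to any reference pair $(x,y)$ in that block).
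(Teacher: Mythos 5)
Your proof is correct and is exactly the argument the paper intends: the paper's own proof is the one-line instruction ``Use the definition of $\varepsilon$-separated,'' and you have simply written out the details (diameter bounds, the separation inequality, and the triangle inequality) that the author leaves implicit.
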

\begin{proof}
Use the definition of $\varepsilon$-separated.
\end{proof}

\begin{lemma}\label{lem: polynomial shift}
Suppose that $P \in \PP$, $x \in \R^d$, $\delta > 0$, and $M > 0$
satisfy
\begin{align*}
&|P(x)| \leq M \delta^2 \\
&|\nabla P| \leq M \delta.
\end{align*}
Then, for any $y \in \R^d$,
\begin{equation*}
|P(y)| \leq M(\delta + |x-y|)^2.
\end{equation*}
\end{lemma}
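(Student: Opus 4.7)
The plan is to exploit that $P$ is affine, so the identity $P(y) = P(x) + \nabla P \cdot (y-x)$ holds exactly, with no remainder term. Applying the triangle inequality and Cauchy--Schwarz then reduces the problem to a trivial algebraic comparison.

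First I would write
\begin{equation*}
|P(y)| \;\leq\; |P(x)| + |\nabla P|\cdot|x-y| \;\leq\; M\delta^2 + M\delta\,|x-y|,
\end{equation*}
using the two hypothesized bounds. Next I would compare this to the target:
\begin{equation*}
M(\delta + |x-y|)^2 \;=\; M\delta^2 + 2M\delta\,|x-y| + M|x-y|^2,
\end{equation*}
and observe that the difference
\begin{equation*}
M(\delta+|x-y|)^2 - \bigl(M\delta^2 + M\delta\,|x-y|\bigr) \;=\; M\delta\,|x-y| + M|x-y|^2
\end{equation*}
is nonnegative, which gives the claim.

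There is no real obstacle here; the only thing to be careful about is that the identity $P(y) = P(x) + \nabla P \cdot (y-x)$ uses the fact that $P \in \mathcal{P}$ is first order, so no higher-order Taylor remainder appears. The bound in the conclusion is actually loose (one could get away with $M(\delta + |x-y|)^2$ replaced by, say, $M\delta^2 + M\delta|x-y|$), but the stated form is the one convenient for later use with the WSPD estimates from Lemma \ref{lem: wspd estimates}, where $|x-y|$ will play the role of a displacement within a well-separated cluster.
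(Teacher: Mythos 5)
Your proof is correct and is essentially identical to the paper's: both expand $P(y) = P(x) + \nabla P\cdot(y-x)$ exactly (no remainder since $P$ is affine), apply the triangle inequality and Cauchy--Schwarz, and observe that $M\delta^2 + M\delta|x-y| \leq M(\delta+|x-y|)^2$. Nothing further is needed.
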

\begin{proof}
Using Taylor's Theorem,
\begin{align*}
|P(y)| &= |P(x) + \nabla P(x) \cdot (y-x)| \\
&\leq |P(x)| + |\nabla P| |x-y| \\
&\leq M\delta^2 + M\delta |x-y| \\
&\leq M(\delta + |x-y|)^2.
\end{align*}
\end{proof}

\begin{proof}[Proof of Theorem \ref{thm: efficient L(PE)}]
In order to simplify notation, let $\widetilde{\Gamma}(x,y)$ denote
the quantity maximized in the definition of
$\widetilde{\Gamma}(\PP_E)$, i.e.,
\begin{equation*}
\widetilde{\Gamma}(x,y) = \max\{\widetilde{A}(x,y), B(x,y)\}.
\end{equation*}
Additionally, set
\begin{equation*}
\widetilde{A}(\PP_E) \triangleq \max_{\substack{x,y \in E \\ x \neq
    y}} \widetilde{A}(x,y), \qquad B(\PP_E) \triangleq \max_{\substack{x,y \in E \\ x \neq y}} B(x,y).
\end{equation*}

Our algorithm works as follows. For now, let $\varepsilon > 0$ be
arbitrary and invoke the algorithm from Theorem \ref{thm: F-K
  WSPD}. This gives us an $\varepsilon$-WSPD $(\TT,\W)$ in
$O((\sqrt{d}/\varepsilon)^dN\log N)$ work and using
$O(\sqrt{d}/\varepsilon)^d N)$ storage. For each $(\Lambda_1,
\Lambda_2) \in \W$, pick at random a representative $(x_{\Lambda_1},
x_{\Lambda_2}) \in \cup \Lambda_1 \times \cup
\Lambda_2$. Additionally, for each $S \in \TT$, pick at random a
representative $x_S \in S$.

Now compute the following:
\begin{align*}
\widetilde{\Gamma}_1 &\triangleq \max_{(\Lambda_1, \Lambda_2) \in \W}
\widetilde{\Gamma}(x_{\Lambda_1}, x_{\Lambda_2}) \\
\widetilde{\Gamma}_2 &\triangleq \max_{(\Lambda_1, \Lambda_2) \in \W}
\, \max_{i=1,2} \, \max_{S \in \Lambda_i} \widetilde{\Gamma}(x_{\Lambda_i},
x_S) \\
\widetilde{\Gamma}_3 &\triangleq \max_{S \in \TT} \max_{x \in S}
\widetilde{\Gamma}(x,x_S) \\
\widetilde{\Gamma}(\PP_E,\TT,\W) &\triangleq \max\{\widetilde{\Gamma}_1,
\widetilde{\Gamma}_2, \widetilde{\Gamma}_3\}.
\end{align*}
Define $\widetilde{A}(\PP_E, \TT, \W)$ and $B(\PP_E, \TT, \W)$
analogously. Using properties \ref{item: F-K 6} and \ref{item: F-K 7} from Section
\ref{sec: wspd}, we see that computing
$\widetilde{\Gamma}(\PP_E,\TT,\W)$ requires
$O((\sqrt{d}/\varepsilon)^dN\log N)$ work and
$O((\sqrt{d}/\varepsilon)^dN)$ storage.

Now we show that $\widetilde{\Gamma}(\PP_E,\TT,\W)$ has the same order
of magnitude as $\widetilde{\Gamma}(\PP_E)$. Clearly,
$\widetilde{\Gamma}(\PP_E, \TT, \W) \leq
\widetilde{\Gamma}(\PP_E)$. For the other inequality, we break
$\widetilde{\Gamma}$ into its two parts, noting that
$\widetilde{\Gamma}(\PP_E) = \max\{\widetilde{A}(\PP_E), B(\PP_E)\}$ and
  $\widetilde{\Gamma}(\PP_E, \TT, \W) = \max\{\widetilde{A}(\PP_E, \TT,
  \W), B(\PP_E, \TT, \W)\}$. Thus we can work with $\widetilde{A}$ and
  $B$ separately.

The functional $B$ is simply the Lipschitz constant of the mapping $x
\mapsto \nabla P_x$. It is known that 
\begin{equation} \label{eqn: B approx}
B(\PP_E) \leq (1+C\varepsilon) B(\PP_E, \TT, \W).
\end{equation}
See for example Proposition 2 of
\cite{fefferman:smthIntEffAlg2013}. Using the particular
construction in this proof, we can take $C = 6$.

We now turn to $\widetilde{A}$. Let $x,y \in E$, $x \neq y$. By
properties \ref{item: F-K 1} and \ref{item: F-K 2} of Section
\ref{sec: wspd}, there is a unique pair $(\Lambda_1, \Lambda_2) \in
\W$ such that $(x,y) \in \cup \Lambda_1 \times \cup
\Lambda_2$. Additionally, by the definition of $(\TT,\W)$, there exists
a set $S \in \Lambda_1$ such that $x \in S$ and a set $T \in \Lambda_2$ such
that $y \in T$.

Let $M = \widetilde{\Gamma}(\PP_E,\TT,\W)$. We then have, using the
triangle inequality, the definition of $\widetilde{\Gamma}_3$, and
Lemma \ref{lem: wspd estimates},
\begin{align}
|P_x(x) - P_y(x)| &\leq |P_x(x) - P_{x_S}(x)| + |P_{x_S}(x) - P_y(x)|
\nonumber \\
&\leq \widetilde{\Gamma}_1|x-x_S|^2 + |P_{x_S}(x) - P_y(x)| \nonumber \\
&\leq \varepsilon M |x-y|^2 + |P_{x_S}(x) -
P_y(x)|. \label{eqn: A estimate 1}
\end{align}
Continuing with the second term of the right hand side of \eqref{eqn:
  A estimate 1}, we use the triangle inequality, Lemma \ref{lem:
  polynomial shift}, the definition of $\widetilde{\Gamma}_2$, and
Lemma \ref{lem: wspd estimates},
\begin{align}
|P_{x_S}(x) - P_y(x)| &\leq |P_{x_S}(x) - P_{x_{\Lambda_1}}(x)| +
|P_{x_{\Lambda_1}}(x) - P_y(x)| \nonumber \\
&\leq \widetilde{\Gamma}_2 (|x_S - x_{\Lambda_1}| + |x -
x_{\Lambda_1}|)^2 + |P_{x_{\Lambda_1}}(x) - P_y(x)| \nonumber \\
&\leq 4\varepsilon^2 M |x-y|^2 + |P_{x_{\Lambda_1}}(x) -
P_y(x)|. \label{eqn: A estimate 2}
\end{align}
Continuing with the second term of the right hand side of \eqref{eqn:
  A estimate 2}, we use the triangle inequality, Lemma \ref{lem:
  polynomial shift}, the definition of $\widetilde{\Gamma}_3$, and
Lemma \ref{lem: wspd estimates},
\begin{align}
|P_{x_{\Lambda_1}}(x) - P_y(x)| &\leq |P_y(x) - P_{x_T}(x)| +
|P_{x_T}(x) - P_{x_{\Lambda_1}}(x)| \nonumber \\
&\leq \widetilde{\Gamma}_3 (|y-x_T| + |x-y|)^2 + |P_{x_T}(x) -
P_{x_{\Lambda_1}}(x)| \nonumber \\
&\leq (1+\varepsilon)^2 M |x-y|^2 + |P_{x_T}(x) -
P_{x_{\Lambda_1}}(x)|. \label{eqn: A estimate 3}
\end{align}
Continuing with the second term of the right hand side of \eqref{eqn:
  A estimate 3}, we use the triangle inequality, Lemma \ref{lem:
  polynomial shift}, the definitions of $\widetilde{\Gamma}_1$ and
$\widetilde{\Gamma}_2$, as well as Lemma \ref{lem: wspd estimates},
\begin{align}
|P_{x_T}(x) - P_{x_{\Lambda_1}(x)}(x)| &\leq |P_{x_T}(x) -
P_{x_{\Lambda_2}}(x)| + |P_{x_{\Lambda_2}}(x) - P_{x_{\Lambda_1}}(x)|
\nonumber \\
&\leq \widetilde{\Gamma}_2(|x_T-x_{\Lambda_2}| + |x-x_{\Lambda_2}|)^2
+ \widetilde{\Gamma}_1(|x_{\Lambda_2}-x_{\Lambda_1}| +
|x-x_{\Lambda_1}|)^2 \nonumber \\
&\leq 2(1+3\varepsilon)^2 M |x-y|^2. \label{eqn: A estimate 4}
\end{align}
Putting \eqref{eqn: A estimate 1}, \eqref{eqn: A estimate 2},
\eqref{eqn: A estimate 3}, \eqref{eqn: A estimate 4} together, we get:
\begin{equation}\label{eqn: A estimate 5}
|P_x(x) - P_y(x)| \leq 3M |x-y|^2 + 23\varepsilon M |x-y|^2.
\end{equation}
Taking $\varepsilon = 1/2$ gives the desired bounds on the work and
storage, and in addition yields
\begin{equation*}
\widetilde{\Gamma}(\PP_E) \leq C \widetilde{\Gamma}(\PP_E, \TT, \W).
\end{equation*}
The proof is completed by applying Lemma \ref{lem: first gamma estimate}.
\end{proof}

\begin{rem}
Examining \eqref{eqn: B approx} and \eqref{eqn: A estimate 5}, we see that
$\widetilde{\Gamma}(\PP_E)$ and $\widetilde{\Gamma}(\PP_E,\TT,\W)$ have
the same order of magnitude with constants $c=1$ and $C =
C(\varepsilon) = 3 + 23\varepsilon$. Thus,
\begin{equation*}
\widetilde{\Gamma}(\PP_E,\TT,\W) \leq \widetilde{\Gamma}(\PP_E) \leq C(\varepsilon)
\widetilde{\Gamma}(\PP_E,\TT,\W),
\end{equation*}
Recalling Lemma \ref{lem: first gamma estimate}, we then have
\begin{equation*}
\widetilde{\Gamma}(\PP_E,\TT,\W) \leq \Gamma(\PP_E) \leq C'(\varepsilon)
\widetilde{\Gamma}(\PP_E,\TT,\W),
\end{equation*}
where $C'(\varepsilon) = 2(1+\sqrt{2})C(\varepsilon) =
2(1+\sqrt{2})(3+23\varepsilon)$. Therefore, as $\varepsilon
\rightarrow 0$, $C'(\varepsilon) \rightarrow 6(1+\sqrt{2})$.
\end{rem}

\section{Acknowledgements}

The author would like to thank Charles Fefferman for introducing him
to the problem and Hariharan Narayanan for numerous insightful
conversations.

\bibliography{/Users/matthewhirn/Dropbox/Mathematics/Bibliography/MainBib}

\end{document}